\newtheorem{thm}{Theorem}[section]
\newtheorem{lem}[thm]{Lemma}
 \theoremstyle{definition}
\theoremstyle{remark}
\numberwithin{equation}{section}
\newcommand{\ben}{\begin{enumerate}}
\newcommand{\een}{\end{enumerate}}
\newcommand{\bit}{\begin{itemize}}
\newcommand{\eit}{\end{itemize}}
\begin{document}

\title
{Recovering Functions Defined on $\Bbb S^{n - 1}$ by Integration on Subspheres Obtained from Hyperplanes Tangent to a Spheroid}
\author{Yehonatan Salman \\ Weizmann Institute of Science \\ Email: salman.yehonatan@gmail.com}
\date{}
\maketitle

\begin{abstract}
The aim of this article is to introduce a method for recovering functions, defined on the $n - 1$ dimensional unit sphere $\Bbb S^{n - 1}$, using their spherical transform, which integrates functions on $n - 2$ dimensional subspheres, on a prescribed family of subspheres of integration. This family of subspheres is obtained as follows, we take a spheroid $\Sigma$ inside $\Bbb S^{n - 1}$ which contains the points $\pm e_{n}$ and then each subsphere of integration is obtained by the intersection of a hyperplane, which is tangent to $\Sigma$, with $\Bbb S^{n - 1}$. In particular, we obtain as a limiting case, by shrinking the spheroid into its main axis, a method for recovering functions in case where the subspheres of integration pass through a common point in $\Bbb S^{n - 1}$.
\end{abstract}

\section{Introduction and Motivation}

Recovering a function $f$, defined on a manifold $\Omega$, by integrating $f$ on a family $\Gamma$ of submanifolds of $\Omega$, in case when one can obtain a well-posed problem (i.e., when the dimension of the family $\Gamma$ is equal to the dimension of $\Omega$), is one of the main subjects of research in Integral Geometry. In many cases, a solution can be found by assuming some symmetric properties on the manifold $\Omega$ such as translation and rotation invariance.

In case where $\Omega$ is a sphere then one can use its special geometry in order to reconstruct a function $f$ in case where the family $\Gamma$ consists of subspheres of $\Omega$, where by a subsphere we mean a nonempty intersection of $\Omega$ with a hyperplane. If we assume, without loss of generality, that $\Omega$ is the unit sphere $\Bbb S^{n - 1}$, then the recovery problem for $\Omega$ was studied and solved in cases where the family $\Gamma$ of subspheres of integration has a specific geometric flavor. Some notable examples are when $\Gamma$ consists of great subspheres (i.e., intersections of hyperplanes which pass through the origin with $\Bbb S^{n - 1}$) (\cite{3, 5, 6, 7, 9, 11, 12, 13, 15}), of subspheres which pass through a common point which lies on $\Bbb S^{n - 1}$ (\cite{9, 13, 15}), of subspheres which are orthogonal to a subsphere of $\Bbb S^{n - 1}$ (\cite{2, 8, 10, 16}) and when $\Gamma$ consists of subspheres obtained by intersections of $\Bbb S^{n - 1}$ with hyperplanes which pass through a common point inside $\Bbb S^{n - 1}$(\cite{13, 14, 15}).

The main aim of this paper is to continue the research obtained in the above mentioned papers and obtain inversion procedures for families of subspheres of $\Bbb S^{n - 1}$ which have a specific geometry. In our case, each subsphere in the family $\Gamma$ is obtained by the intersection of $\Bbb S^{n - 1}$ with a hyperplane which is tangent to a fixed spheroid $\Sigma$ inside $\Bbb S^{n - 1}$ containing the north and south poles $\pm e_{n}$. In particular, we will show how by shrinking $\Sigma$ into its main axis one can obtain an inversion procedure for the case of the so called spherical slice transform (see \cite[Chapter 3, page 108]{9}) where the subspheres of integration pass through a common point $p$ which lies on $\Bbb S^{n - 1}$ (where in our case $p$ will be the south pole $-e_{n}$). It should be mentioned however that in this paper the solution for the above reconstruction problem is given as a series of functions rather than in a closed form. This is because the method used here includes, at some stage, an expansion into spherical harmonics. Expansion into spherical harmonics in our case can be used since the spheroid $\Sigma$ has a rotational symmetry with respect to its main axis. Of course, if $\Sigma$ is a general ellipsoid inside $\Bbb S^{n - 1}$ then one cannot use the method present here, the solution for this general problem is left for future research.

Our paper is organized as follows, in Chapter 2 we give all the necessary mathematical background for the formulation of the main result Theorem 2.1 and formulate the main result. In Chapter 3 we discuss the method behind the proof of Theorem 2.1 and show how the limiting case, where $\Sigma$ shrinks into its main axis, yields an inversion procedure for the spherical slice transform. In Chapter 4 we give the proof of Theorem 2.1. Chapter 5 is more technical and contains a characterization of the stereographic projections of the subspheres of integration and also contains a proof of the factorization of the infinitesimal volume measure, of each subsphere of integration, under the stereographic projection.

\section{Mathematical Background and the Main Result}

Denote by $\Bbb R^{n}$ the $n$ dimensional Euclidean space and by $\langle\hskip0.1cm,\hskip0.1cm\rangle$ the standard scalar product on $\Bbb R^{n}$. Denote by $\Bbb R^{+}$ the ray $[0,\infty)$, by $\Bbb S^{n - 1}$ the $n - 1$ dimensional unit sphere of $\Bbb R^{n}$, i.e., $\Bbb S^{n - 1} = \left\{x\in\Bbb R^{n}:|x| = 1\right\}$ and by $\omega_{n - 1} = 2\pi^{n / 2}/\Gamma(n / 2)$ the volume of $\Bbb S^{n - 1}$. Denote by $C\left(\Bbb S^{n - 1}\right)$ the set of continuous functions defined on $\Bbb S^{n - 1}$ and on $C\left(\Bbb S^{n - 1}\right)$ define the following inner product
$$\hskip-4.5cm\left\langle f_{1}, f_{2}\right\rangle_{\Bbb S^{n - 1}} = \int_{\Bbb S^{n - 1}}f_{1}(\psi)\overline{f_{2}(\psi)}d\psi, f_{1},f_{2}\in C\left(\Bbb S^{n - 1}\right)$$
where $d\psi$ is the standard infinitesimal volume measure on $\Bbb S^{n - 1}$.

For a point $\psi$ in $\Bbb S^{n - 1}$ define the following $n - 2$ dimensional subsphere of $\Bbb S^{n - 1}$:
$$\hskip-8cm \Bbb S_{\psi}^{n - 2} = \left\{x\in\Bbb S^{n - 1}:\langle x,\psi\rangle = 0\right\}.$$
For a fixed real number $\lambda > 0$ define the following spheroid in $\Bbb R^{n}$:
\begin{equation}\hskip-4.5cm\Sigma_{\lambda} = \{x\in\Bbb R^{n}:x_{n}^{2} + (x_{1}^{2} + ... + x_{n - 1}^{2})\cosh^{2}\lambda = 1\}.\end{equation}
Define the following stereographic and inverse stereographic projections respectively,
$$\hskip-4.3cm\Lambda:\Bbb S^{n - 1}\setminus\{e_{n}\} \rightarrow \Bbb R^{n - 1}, \Lambda(x) = \left(\frac{x_{1}}{1 - x_{n}},...,\frac{x_{n - 1}}{1 - x_{n}}\right),$$
$$\hskip-2cm\Lambda^{-1}:\Bbb R^{n - 1} \rightarrow \Bbb S^{n - 1}\setminus\{e_{n}\}, \Lambda(y) = \left(\frac{2y_{1}}{1 + |y|^{2}},...,\frac{2y_{n - 1}}{1 + |y|^{2}}, \frac{- 1 + |y|^{2}}{1 + |y|^{2}}\right).$$
We define the "stereographic projection" $f^{\ast}$ of a function $f$ in $C(\Bbb S^{n - 1})$ by
$$\hskip-8.75cm f^{\ast}:\Bbb R^{n - 1}\rightarrow\Bbb R, f^{\ast} = f\circ\Lambda^{-1}.$$
We will also define the function
\begin{equation}\hskip-5.4cm f^{\ast\ast}(x) = \frac{(f\circ\Lambda^{-1})(x)}{|x|^{n - 2}(1 + |x|^{2})^{n - 2}}\hskip0.1cm\textrm{where}\hskip0.1cm x\in\Bbb R^{n - 1}\setminus\{0\}.\end{equation}

Denote by $\mathcal{G}$ the isotropic group of rotations in $\Bbb S^{n - 1}$ which leave the unit vector $e_{n}$ fixed. That is
\vskip-0.2cm
$$\hskip-8.5cm \mathcal{G} = \{g\in SO(n):ge_{n} = e_{n}\}.$$

Define the Gegenbauer polynomials $C_{l}^{\lambda} = C_{l}^{\lambda}(t)$ of order $\lambda > -\frac{1}{2}$ and degree $l$ by the following orthogonality relations
\[
\hskip-4.1cm\int_{-1}^{1}C_{l}^{\lambda}(t)C_{k}^{\lambda}(t)(1 - t^{2})^{\lambda - \frac{1}{2}}dt =
\begin{cases}
0, \hskip2.1cm l\neq k,\\
\frac{2^{2\lambda - 1}\Gamma^{2}\left(\lambda + \frac{1}{2}\right)l!}{(l + \lambda)\Gamma(l + 2\lambda)}, l = k,
\end{cases}
\]
and for $\lambda = -\frac{1}{2}$ define $C_{l}^{-\frac{1}{2}}(t) = \cos(l\arccos(t))$.

For every integer $m\geq0$ define the following function $h_{m,\lambda}:\Bbb R^{+}\rightarrow\Bbb R$ by
\begin{equation}\hskip-13.5cm h_{m,\lambda}(x)\end{equation}
\[ \hskip-1.5cm = \begin{cases}
\left(2^{4 - n}\tanh^{3 - n}\lambda\right)xC_{m}^{\frac{n - 3}{3}}\left(\frac{x^{2} + 1 - \tanh^{2}\lambda}{2x}\right)\\
\left((1 + \tanh\lambda - x)(1 + \tanh\lambda + x)\right.\\
 \left.(x - 1 + \tanh\lambda)(x + 1 - \tanh\lambda)\right)^{\frac{n - 4}{2}}, 1 - \tanh\lambda \leq x \leq 1 + \tanh\lambda,\\
0, \hskip6.5cm o.w
\end{cases}
\]
For a function $F$, defined on $\Bbb R^{+}$, define the Mellin transform $\mathcal{M}F$ of $F$ by
$$\hskip-7.5cm(\mathcal{M}F)(s) = \int_{0}^{\infty}y^{s - 1}F(y)dy, s\in\Bbb C$$
where it should be noted that the above integral might not converge for every $s\in\Bbb C$.\\

For the Mellin transform we have the following inversion and convolution formulas for two functions $F_{1}$ and $F_{2}$ defined on $\Bbb R^{+}$ (see \cite{4}, Chapter 8.2 and 8.3):
\begin{equation}\hskip-5.7cm\mathcal{M}^{-1}(F_{1})(r) = \frac{1}{2\pi i}\int_{\varrho - i\infty}^{\varrho + i\infty}r^{-s}\mathcal{M}(F_{1})(s)ds,\end{equation}
\begin{equation}\hskip-5.75cm\mathcal{M}(F_{1}\star F_{2})(r) = \mathcal{M}(F_{1})(r)\mathcal{M}(F_{2})(1 - r)\end{equation}
where $\varrho\in(0,1)$ and where the convolution $F_{1}\star F_{2}$ is defined by
\vskip-0.2cm
$$\hskip-7cm (F_{1}\star F_{2})(s) = \int_{0}^{\infty}F_{1}(ss')F_{2}(s')ds'.$$
As we will show later, if $F_{1},F_{2}\in L^{1}(\Bbb R^{+})$ then formulas (2.4) and (2.5) are valid when $0 < \Re(r) < 1$.

For a function $f$ in $C\left(\Bbb S^{n - 1}\right)$ define its spherical transform $Sf$ to be the integral transform which integrates $f$ on $n - 2$ dimensional subspheres in $\Bbb S^{n - 1}$. That is,
$$\hskip-9.2cm (Sf)(\mathbf{\mathcal{S}}) = \int_{\mathbf{\mathcal{S}}}f(x)dS_{x}$$
where $dS_{x}$ is the standard infinitesimal volume measure on the subsphere $\mathbf{\mathcal{S}}$ of integration. Our aim is to recover functions in $C(\Bbb S^{n - 1})$ using their spherical transform where each subsphere of integration is obtained by the intersection of $\Bbb S^{n - 1}$ with a hyperplane which is tangent to the spheroid $\Sigma_{\lambda}$. Let us denote this family of subspheres by $\Upsilon_{\lambda}$. From Lemma 5.1 it follows that the family of $n - 2$ dimensional spheres $\Upsilon_{\lambda}^{\ast}$, which is obtained by projecting each subsphere in $\Upsilon_{\lambda}$ using the stereographic projection $\Lambda$, can be parameterized as in (5.1). Thus, by taking the inverse stereographic projection $\Lambda^{-1}$ we obtain the following parametrization for $\Upsilon_{\lambda}$:
\begin{equation}\hskip-3cm\Upsilon_{\lambda} = \underset{\psi\in\Bbb S^{n - 2}, c > 0}{\bigcup}\left\{\{\Lambda^{-1}\left(c\psi + c(\tanh\lambda)\omega\right):\omega\in\Bbb S^{n - 2}\}\right\}.\end{equation}
Thus, if we define the following $n - 2$ dimensional sphere in $\Bbb S^{n - 1}$
$$\hskip-5.3cm\mathrm{S}_{\psi,c} = \left\{\Lambda^{-1}\left(c\psi + c(\tanh\lambda)\omega\right), \omega\in\Bbb S^{n - 2}\right\}$$
then our data consists of the family of integrals
\begin{equation}\hskip-5.3cm(Sf)(\mathrm{S}_{\psi,c}) = \int_{\mathrm{S}_{\psi,c}}f(x)d\mathrm{S}_{\psi,c}, \psi\in\Bbb S^{n - 2}, c > 0\end{equation}
where $d\mathrm{S}_{\psi,c}$ is the standard measure on $\mathrm{S}_{\psi,c}$. We have the following result for the recovering of a function $f$ in $C\left(\Bbb S^{n - 1}\right)$ from the family of integrals (2.7).

\begin{thm}

Let $f$ be a function in $C\left(\Bbb S^{n - 1}\right)$ such that $f^{\ast\ast}$ as defined in (2.2) is in $L^{1}\left(\Bbb R^{n - 1}\setminus\{0\}\right)$ and such that
$$\hskip-6.7cm f^{\ast}(r\zeta) = \sum_{m = 0}^{\infty}\sum_{l = 1}^{d_{m}}f_{m,l}(r)Y_{l}^{m}(\xi)$$
is the spherical harmonic expansion of $f^{\ast} = f\circ\Lambda^{-1}$. Then, for each $m\geq0$ and $1\leq l\leq d_{m}$ the term $f_{m,l}$ can be recovered from the integral transform (2.7) as follows
$$\hskip-2.5cm f_{m,l}(r) = \frac{(1 + r^{2})^{n - 2}}{2\pi i}\int_{\varrho - i\infty}^{\varrho + i\infty}r^{-s}\frac{\mathcal{M}\left(\mathcal{K}_{m,l}\right)(s)}{\mathcal{M}\left(h_{m,\lambda}\right)(1 - s)}ds$$
where
$$\hskip-2.5cm \mathcal{K}_{m,l}(c) = \frac{1}{(2c\tanh\lambda)^{n - 2}\omega_{n - 3}}\int_{\Bbb S^{n - 2}}(Sf)(\mathrm{S}_{\psi,c})\overline{Y_{l}^{m}(\psi)}d\psi,$$
$h_{m,\lambda}$ is defined as in (2.3) and $\varrho$ is any number in the interval $(0, 1)$.

\end{thm}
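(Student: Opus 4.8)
The plan is to push the data (2.7) through the stereographic projection $\Lambda$ and thereby turn it, mode by mode in the spherical harmonic expansion, into a one-dimensional Mellin convolution in a radial variable, which can then be inverted by the Mellin convolution and inversion formulas (2.5) and (2.4). First I would rewrite $(Sf)(\mathrm{S}_{\psi,c})$ as an integral over $\omega\in\Bbb S^{n-2}$. Using the factorization of the surface measure under $\Lambda$ proved in Chapter~5 (Lemma~5.1), namely $dS_x=\left(\frac{2}{1+|y|^2}\right)^{n-2}dS_y$ with $y=c\psi+c(\tanh\lambda)\omega$ and $dS_y=(c\tanh\lambda)^{n-2}d\omega$, one gets $(Sf)(\mathrm{S}_{\psi,c})=2^{n-2}(c\tanh\lambda)^{n-2}\int_{\Bbb S^{n-2}}(1+|y|^2)^{-(n-2)}f^{\ast}(y)\,d\omega$. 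Inserting this into the definition of $\mathcal{K}_{m,l}$, the factor $2^{n-2}(c\tanh\lambda)^{n-2}$ cancels exactly against the normalising constant $(2c\tanh\lambda)^{n-2}$, leaving the clean double integral
\[
\mathcal{K}_{m,l}(c)=\frac{1}{\omega_{n-3}}\int_{\Bbb S^{n-2}}\int_{\Bbb S^{n-2}}\overline{Y_l^m(\psi)}\,\frac{f^{\ast}(y)}{(1+|y|^2)^{n-2}}\,d\omega\,d\psi ,
\]
where $|y|^2=c^2\left(1+2\tanh\lambda\,\langle\psi,\omega\rangle+\tanh^2\lambda\right)$ depends on $\psi,\omega$ only through $t=\langle\psi,\omega\rangle$.

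Next I would substitute the expansion $f^{\ast}(y)=\sum_{m',l'}f_{m',l'}(|y|)Y_{l'}^{m'}(\hat y)$, with $\hat y=y/|y|$. For fixed $c$, the map sending $f^{\ast}$ to $W(\psi,c):=\int_{\Bbb S^{n-2}}(1+|y|^2)^{-(n-2)}f^{\ast}(y)\,d\omega$ commutes with the diagonal action of the rotation group $SO(n-1)$ on the two sphere variables; hence it is diagonalised by spherical harmonics and carries the $(m',l')$ mode of $f^{\ast}$ to a multiple of $Y_{l'}^{m'}(\psi)$ with a multiplier depending only on $m'$. Consequently only the $(m,l)$ mode contributes to $\mathcal{K}_{m,l}$, and summing the surviving angular factors over $l$ and invoking the addition theorem for spherical harmonics on $\Bbb S^{n-2}$ produces the Gegenbauer polynomial $C_m^{\frac{n-3}{2}}(\langle\psi,\hat y\rangle)$. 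A direct computation gives $\langle\psi,\hat y\rangle=(1+\tanh\lambda\,t)/\rho=\frac{\rho^2+1-\tanh^2\lambda}{2\rho}$, where $\rho=\sqrt{1+2\tanh\lambda\,t+\tanh^2\lambda}$, which is exactly the argument appearing in (2.3).

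I would then collapse the remaining double spherical integral of a function of $t$ to the single integral $\int_{-1}^{1}(\,\cdot\,)(1-t^2)^{(n-4)/2}\,dt$ and change variables from $t$ to $\rho$. Since $2\rho\,d\rho=2\tanh\lambda\,dt$ and $1-t^2=(2\tanh\lambda)^{-2}(1+\tanh\lambda-\rho)(1+\tanh\lambda+\rho)(\rho-1+\tanh\lambda)(\rho+1-\tanh\lambda)$, the weight, the Jacobian and the constant $2^{4-n}\tanh^{3-n}\lambda$ assemble precisely into $h_{m,\lambda}(\rho)$, with $\rho$ ranging over the support $[1-\tanh\lambda,\,1+\tanh\lambda]$. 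This yields the identity $\mathcal{K}_{m,l}(c)=\int_{0}^{\infty}g_{m,l}(c\rho)h_{m,\lambda}(\rho)\,d\rho=(g_{m,l}\star h_{m,\lambda})(c)$, where $g_{m,l}(r):=f_{m,l}(r)/(1+r^2)^{n-2}$. Finally, the hypothesis $f^{\ast\ast}\in L^1$ forces $g_{m,l}\in L^1(\Bbb R^{+})$, because $g_{m,l}(r)=r^{n-2}\int_{\Bbb S^{n-2}}f^{\ast\ast}(r\xi)\overline{Y_l^m(\xi)}\,d\xi$ and $dy=r^{n-2}dr\,d\xi$, while $h_{m,\lambda}\in L^1$ by compact support; so (2.5) applies in the strip $0<\Re(s)<1$, giving $\mathcal{M}(g_{m,l})(s)=\mathcal{M}(\mathcal{K}_{m,l})(s)/\mathcal{M}(h_{m,\lambda})(1-s)$, and Mellin inversion (2.4) followed by multiplication by $(1+r^2)^{n-2}$ produces the stated formula for $f_{m,l}$.

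The main obstacle is the angular reduction of the third paragraph: justifying the $SO(n-1)$-equivariance that decouples the modes, and above all pushing the addition-theorem computation and the $t\to\rho$ change of variables through with every normalisation constant (those of the harmonics $Y_l^m$, of the addition theorem, and the factors $\omega_{n-2},\omega_{n-3},d_m$) so that the kernel is \emph{exactly} $h_{m,\lambda}$ and the convolution carries constant $1$ rather than an $m$-dependent factor. The geometric matching of the Gegenbauer argument and of the quartic weight to the support $[1-\tanh\lambda,\,1+\tanh\lambda]$ is the reassuring structural check that the kernel must be $h_{m,\lambda}$, but getting the bookkeeping to land on (2.3) precisely is the delicate point.
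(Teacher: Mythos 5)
Your proposal is correct and follows essentially the same route as the paper: factor the subsphere measure under the stereographic projection (Lemma 5.2), expand $g=f^{\ast}/(1+|\cdot|^{2})^{n-2}$ into spherical harmonics, reduce each mode via the Gegenbauer identity and the change of variables $x=\sqrt{1+\tanh^{2}\lambda+2\tanh\lambda\,t}$ to the Mellin convolution $\mathcal{K}_{m,l}=g_{m,l}\star h_{m,\lambda}$, and invert with (2.4)--(2.5), using the $L^{1}$ statements of Lemmas 5.3--5.4. The only cosmetic difference is that you derive the angular reduction from $SO(n-1)$-equivariance plus the addition theorem, whereas the paper directly invokes the identity $\int_{\Bbb S_{\psi}^{n-3}}Y_{l}^{m}((\cos\xi)\psi+(\sin\xi)\eta)\,d\eta=\omega_{n-3}C_{m}^{\frac{n-3}{2}}(\cos\xi)Y_{l}^{m}(\psi)$.
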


\section{The Method Behind the Proof of Theorem 2.1 and the Limiting Case $\lambda\rightarrow\infty$}

The idea behind the proof of Theorem 2.1 consists mainly of three steps.

In the first step we use the stereographic projection in order to project our family $\Upsilon_{\lambda}$ of subspheres in $\Bbb S^{n - 1}$ into a family of hyperspheres in $\Bbb R^{n - 1}$. As Lemma 5.1 shows, the family $\Upsilon_{\lambda}$ is projection into a well defined family $\Upsilon_{\lambda}^{\ast}$ of hyperspheres in $\Bbb R^{n - 1}$. More specifically, each sphere in $\Upsilon_{\lambda}^{\ast}$ has its radius proportional, with the factor $\tan\lambda$, to the distance of its center from the origin. As Lemma 5.2 shows, the infinitesimal volume measure of each subsphere in $\Upsilon_{\lambda}$ is factored under the stereographic projection and thus we can make all of our analysis on $\Bbb R^{n - 1}$ with the family $\Upsilon_{\lambda}^{\ast}$ of spheres of integration.

In the second step we exploit the rotational invariance (with respect to the origin) of the family $\Upsilon_{\lambda}^{\ast}$ in order to reduce our problem to each term in the spherical harmonic expansion of the modified projection $g(x) = f^{\ast}(x) / (1 + |x|^{2})^{n - 2}$ of the function $f$ in question to be recovered. For each such term $g_{m,l}$ we obtain a convolution type equation relating $g_{m,l}$ to its corresponding term $\mathcal{K}_{m,l}$ in the expansion of the integral transform (2.7) into spherical harmonics. Using the inversion and convolution formulas for the Mellin transform one is able to express the term $g_{m,l}$ in terms of $\mathcal{K}_{m,l}$. Since we can extract each such term $g_{m,l}$ we can obviously recover $g$ and thus we can also recover $f^{\ast}$.

In the third and final step we just use the inverse stereographic projection on $f^{\ast}$ in order return to our original function $f$.\\

Observe that by Lemma 5.1 it follows that for $\lambda\rightarrow\infty$ the projected family of spheres $\Upsilon_{\lambda}^{\ast}$ has the following parametrization
$$\Upsilon_{\lambda}^{\ast} = \underset{\psi\in\Bbb S^{n - 2}, c > 0}{\bigcup}\left\{\{c\psi + c\omega:\omega\in\Bbb S^{n - 2}\}\right\}.$$
That is, $\Upsilon_{\lambda}^{\ast}$ consists of the hyperspheres in $\Bbb R^{n - 1}$ passing through the origin. By taking the inverse stereographic projection it is an easy exercise to show that the corresponding family $\Upsilon_{\lambda}$ of subspheres consists of all the subspheres which pass through the south pole $-e_{n}$. Hence, this limiting case yields an inversion procedure for the case where the subspheres of integration pass through a common point which lies on $\Bbb S^{n - 1}$. Observe that for this case the function $h_{m,\lambda}$ has the simpler form
\[ h_{m,\infty}(x) =
\begin{cases}
2^{4 - n}x^{n - 3}(4 - x^{2})^{\frac{n - 4}{2}}C_{m}^{\frac{n - 3}{3}}\left(\frac{x}{2}\right), 0 \leq x \leq 2,\\
0, \hskip5.15cm x\geq2.
\end{cases}
\]

\section{Proof of Theorem 2.1}
Denote $x = \Lambda^{-1}\left(c\psi + c(\tanh\lambda)\omega\right)$, then, by Lemma 5.2, $dx = d\mathrm{S}_{\psi,c}$ is given by formula (5.3). Hence we can write
\vskip-0.2cm
$$\hskip-12.5cm(Sf)(\mathrm{S}_{\psi,c})$$
$$ \hskip-1.5cm = \int_{\Bbb S^{n - 2}}\frac{(2c\tanh\lambda)^{n - 2}f\left(\Lambda^{-1}\left(c\psi + c(\tanh\lambda)\omega\right)\right)d\omega}{\left(1 + c^{2}\left(1 + \tanh^{2}\lambda + 2\langle\omega,\psi\rangle\tanh\lambda\right)\right)^{n - 2}}, \psi\in\Bbb S^{n - 2}, c > 0.$$
Hence, if we define
\vskip-0.2cm
\begin{equation}\hskip-5cm (Sf)^{\ast}(\mathrm{S}_{\psi,c}) = \frac{(Sf)(\mathrm{S}_{\psi,c})}{(2c\tanh\lambda)^{n - 2}}, g(x) = \frac{\left(f\circ\Lambda^{-1}\right)(x)}{(1 + |x|^{2})^{n - 2}},\end{equation}
then we have
\vskip-0.2cm
\begin{equation}\hskip-3.7cm (Sf)^{\ast}(\mathrm{S}_{\psi,c}) = \int_{\Bbb S^{n - 2}}g\left(c\psi + c(\tanh\lambda)\omega\right)d\omega, \psi\in\Bbb S^{n - 2}, c > 0.\end{equation}
Let us expand $g$ into spherical harmonics in $\Bbb R^{n - 1}$:
$$\hskip-6cm g(r\zeta) = \sum_{m = 0}^{\infty}\sum_{l = 1}^{d_{m}}g_{m,l}(r)Y_{l}^{m}(\zeta), r\geq0, \zeta\in\Bbb S^{n - 2}.$$
Observe that if $r\zeta = c\psi + c(\tanh\lambda)\omega$ then
$$r = c\sqrt{1 + \tanh^{2}\lambda + 2\tanh\lambda\langle\psi,\omega\rangle},\hskip0.1cm \zeta = \frac{\psi + (\tanh\lambda)\omega}{\sqrt{1 + \tanh^{2}\lambda + 2\tanh\lambda\langle\psi,\omega\rangle}}.$$
Hence, inserting the spherical harmonic expansion of $g$ into equation (4.2) we obtain
$$\hskip-3cm(Sf)^{\ast}(\mathrm{S}_{\psi,c}) = \sum_{m = 0}^{\infty}\sum_{l = 1}^{d_{m}}\int_{\Bbb S^{n - 2}}g_{m,l}\left(c\sqrt{1 + \tanh^{2}\lambda + 2\tanh\lambda\langle\psi,\omega\rangle}\right)$$ $$\hskip4cm \times Y_{l}^{m}\left(\frac{\psi + (\tanh\lambda)\omega}{\sqrt{1 + \tanh^{2}\lambda + 2\tanh\lambda\langle\psi,\omega\rangle}}\right)d\omega, \psi\in\Bbb S^{n - 2}, c > 0.$$
Making the change of variables
$$\hskip-3cm\omega = (\cos\varphi)\psi + (\sin\varphi)\eta, (\varphi,\eta)\in[0,\pi]\times\Bbb S_{\psi}^{n - 3}, d\omega = \sin^{n - 3}\varphi d\eta d\varphi$$
we have
$$\hskip-3cm(Sf)^{\ast}(\mathrm{S}_{\psi,c}) = \sum_{m = 0}^{\infty}\sum_{l = 1}^{d_{m}}\int_{0}^{\pi}\int_{\Bbb S_{\psi}^{n - 3}}g_{m,l}\left(c\sqrt{1 + \tanh^{2}\lambda + 2\tanh\lambda\cos\varphi}\right)$$ \begin{equation}\hskip2cm \times Y_{l}^{m}\left(\frac{(1 + \tanh\lambda\cos\varphi)\psi + (\tanh\lambda\sin\varphi)\eta}{\sqrt{1 + \tanh^{2}\lambda + 2\tanh\lambda\cos\varphi}}\right)\sin^{n - 3}\varphi d\eta d\varphi, \psi\in\Bbb S^{n - 2}, c > 0.\end{equation}
Observe that for constant $\varphi$ and $\lambda$ we can denote
$$\cos\xi = \frac{1 + \tanh\lambda\cos\varphi}{\sqrt{1 + \tanh^{2}\lambda + 2\tanh\lambda\cos\varphi}},  \sin\xi = \frac{\tanh\lambda\sin\varphi}{\sqrt{1 + \tanh^{2}\lambda + 2\tanh\lambda\cos\varphi}}$$
for some $\xi$ in $[-\pi,\pi]$. Since each $Y_{l}^{m}, m\geq0, 1\leq l\leq d_{m}$ is an eigenfunction of the Laplace-Beltrami operator in $\Bbb S^{n - 2}$ it follows (see for example \cite{1}) that
$$\hskip-3.5cm\int_{\Bbb S_{\psi}^{n - 3}}Y_{l}^{m}((\cos\xi)\psi + (\sin\xi)\eta)d\eta = \omega_{n - 3}C_{m}^{\frac{n - 3}{2}}(\cos\xi)Y_{l}^{m}(\psi).$$
Hence, from equation (4.3) we have
$$\hskip-1.5cm(Sf)^{\ast}(\mathrm{S}_{\psi,c}) = \omega_{n - 3}\sum_{m = 0}^{\infty}\sum_{l = 1}^{d_{m}}Y_{l}^{m}(\psi)\int_{0}^{\pi}g_{m,l}\left(c\sqrt{1 + \tanh^{2}\lambda + 2\tanh\lambda\cos\varphi}\right)$$ $$\hskip2cm \times C_{m}^{\frac{n - 3}{3}}\left(\frac{1 + \tanh\lambda\cos\varphi}{\sqrt{1 + \tanh^{2}\lambda + 2\tanh\lambda\cos\varphi}}\right)\sin^{n - 3}\varphi d\varphi, \psi\in\Bbb S^{n - 2}, c > 0.$$
From the orthonormality relations we have for $\left\{Y_{l}^{m}\right\}_{m\geq0, l = 1,...,d_{m}}$ on $\Bbb S^{n - 2}$ it follows that
\vskip-0.2cm
$$\hskip-2.5cm \frac{1}{\omega_{n - 3}}\langle (Sf)^{\ast}(\mathrm{S}_{\cdot,c}), Y_{l}^{m}\rangle_{\Bbb S^{n - 2}} = \int_{0}^{\pi}g_{m,l}\left(c\sqrt{1 + \tanh^{2}\lambda + 2\tanh\lambda\cos\varphi}\right)$$
\begin{equation}\hskip2cm \times C_{m}^{\frac{n - 3}{3}}\left(\frac{1 + \tanh\lambda\cos\varphi}{\sqrt{1 + \tanh^{2}\lambda + 2\tanh\lambda\cos\varphi}}\right)\sin^{n - 3}\varphi d\varphi, c > 0.\end{equation}
Let us make the following change of variables
$$x = \sqrt{1 + \tanh^{2}\lambda + 2\tanh\lambda\cos\varphi}, dx = \frac{\tanh\lambda\sin\varphi d\varphi}{\sqrt{1 + \tanh^{2}\lambda + 2\tanh\lambda\cos\varphi}},$$
in equation (4.4). Then, we have
\vskip-0.2cm
$$\hskip-10cm \frac{1}{\omega_{n - 3}}\langle (Sf)^{\ast}(\mathrm{S}_{\cdot,c}), Y_{l}^{m}\rangle_{\Bbb S^{n - 2}}$$
$$ \hskip-3cm = \frac{1}{2^{n - 4}\tanh^{n - 3}\lambda}\int_{1 - \tanh\lambda}^{1 + \tanh\lambda}xg_{m,l}\left(cx\right)
C_{m}^{\frac{n - 3}{3}}\left(\frac{x^{2} + 1 - \tanh^{2}\lambda}{2x}\right)$$ $$\left((1 + \tanh\lambda - x)(1 + \tanh\lambda + x)(x - 1 + \tanh\lambda)(x + 1 - \tanh\lambda)\right)^{\frac{n - 4}{2}}dx$$
\begin{equation} \hskip-8.5cm = \int_{0}^{\infty}g_{m,l}(cx)h_{m,\lambda}(x)dx, c > 0.\end{equation}
Thus, if we denote
$$\hskip-6cm \mathcal{K}_{m,l}(c) = \frac{1}{\omega_{n - 3}}\langle (Sf)^{\ast}(\mathrm{S}_{\cdot,c}), Y_{l}^{m}\rangle_{\Bbb S^{n - 2}}$$
then by using the Mellin convolution formula on equation (4.5) we obtain
\begin{equation}\mathcal{M}\left(\mathcal{K}_{m,l}\right)(s) = \mathcal{M}\left(g_{m,l}\right)(s)\mathcal{M}\left(h_{m,\lambda}\right)(1 - s).\end{equation}
In Lemma 5.3 it is proved that both $\mathcal{M}\left(g_{m,l}\right)$ and $\mathcal{M}\left(h_{m,\lambda}\right)(1 - \cdot)$ exist on the strip $0 < \Re(s) < 1$ and thus equation (4.6) is valid in this domain. Thus, after dividing both sides of equation (4.6) by $\mathcal{M}\left(h_{m,\lambda}\right)(1 - s)$ and using the Melling inversion formula and the fact that $\mathcal{M}\left(\mathcal{K}_{m,l}\right), \mathcal{M}\left(g_{m,l}\right)$ and $\mathcal{M}\left(h_{m,\lambda}\right)(1 - \cdot)$ are all defined on the strip $0 < \Re(s) < 1$ we have
$$g_{m,l}(r) = \frac{1}{2\pi i}\int_{\varrho - i\infty}^{\varrho + i\infty}r^{-s}\frac{\mathcal{M}\left(\mathcal{K}_{m,l}\right)(s)}{\mathcal{M}\left(h_{m,\lambda}\right)(1 - s)}ds $$
where $\varrho$ is any number in the interval $(0, 1)$. Using the relation (4.1) between $g$ and $f^{\ast} = f\circ\Lambda^{-1}$ proves Theorem 2.1.\hskip9cm$\square$
\section{Appendix}

\begin{lem}

The family $\Upsilon_{\lambda}^{\ast}$ of all the $n - 2$ dimensional spheres in $\Bbb R^{n - 1}$ obtained by projections, under $\Lambda$, of the $n - 2$ dimensional spheres obtained by intersections of $\Bbb S^{n - 1}$ with hyperplanes which are tangent to the spheroid $\Sigma_{\lambda}$, has the following parametrization

\begin{equation}\Upsilon_{\lambda}^{\ast} = \underset{\psi\in\Bbb S^{n - 2}, c > 0}{\bigcup}\left\{\{c\psi + c(\tanh\lambda)\omega:\omega\in\Bbb S^{n - 2}\}\right\}.\end{equation}
Furthermore, the projection of the intersection of $\Bbb S^{n - 1}$ with the unique hyperplane $H$ which is tangent to $\Sigma_{\lambda}$ and has the unit normal $n = ((\cos\theta)\psi, \sin\theta)$ ($\psi\in\Bbb S^{n - 2}, \theta\in\left[-\frac{\pi}{2}, \frac{\pi}{2}\right]$) is the $n - 2$ dimensional sphere in $\Bbb R^{n - 1}$ which corresponds, in the parametrization (5.1), to the subsphere with the parameters $c$ and $\psi$ where the relation between $c$ and $\theta$ is given by
\begin{equation}\hskip-3cm c = \frac{\cosh\lambda\cos\theta}{\sqrt{1 + \sinh^{2}\lambda\sin^{2}\theta} - \cosh\lambda\sin\theta},\hskip0.1cm \theta = \arctan\left(\frac{c^{2} - \cosh^{2}\lambda}{2c\cdot\cosh^{2}\lambda}\right).\end{equation}

\end{lem}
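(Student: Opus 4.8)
The plan is to reduce everything to defining equations: for each oriented unit normal I would identify the unique tangent hyperplane to $\Sigma_{\lambda}$, then substitute $\Lambda^{-1}$ into that hyperplane's equation and read off the centre and radius of the resulting sphere in $\Bbb R^{n-1}$. To locate the tangent hyperplane I would write $\Sigma_{\lambda}$ as the ellipsoid $\langle Ax,x\rangle = 1$ with $A = \operatorname{diag}(\cosh^{2}\lambda,\dots,\cosh^{2}\lambda,1)$, so that its support function in a unit direction $n$ equals $\sqrt{\langle A^{-1}n,n\rangle}$. Taking $n = ((\cos\theta)\psi,\sin\theta)$ and $A^{-1} = \operatorname{diag}(\operatorname{sech}^{2}\lambda,\dots,\operatorname{sech}^{2}\lambda,1)$, the tangent hyperplane whose outward normal is $n$ is $H = \{x:\langle x,n\rangle = d\}$ with $d = \sqrt{\operatorname{sech}^{2}\lambda\cos^{2}\theta + \sin^{2}\theta}$. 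Multiplying inside the root by $\cosh^{2}\lambda$ and using $\cosh^{2}\lambda - 1 = \sinh^{2}\lambda$ rewrites this as $d = \cosh^{-1}\lambda\sqrt{1 + \sinh^{2}\lambda\sin^{2}\theta}$. A quick check gives $0 < d < 1$ for $\theta\in(-\tfrac{\pi}{2},\tfrac{\pi}{2})$, so $H\cap\Bbb S^{n-1}$ is a genuine $(n-2)$-subsphere, and since $d\neq\sin\theta = n_{n}$ it misses the pole $e_{n}$; hence its $\Lambda$-image is a sphere and not a hyperplane.

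The central computation is the $\Lambda$-image of such a subsphere. Substituting $\Lambda^{-1}(y)$ into $\langle x,n\rangle = d$ and clearing the denominator $1 + |y|^{2}$ produces $(n_{n} - d)|y|^{2} + 2\langle y,n'\rangle - (n_{n} + d) = 0$; dividing by $n_{n} - d$ and completing the square yields a sphere with centre $C = n'/(d - n_{n})$ and radius $R = \sqrt{1 - d^{2}}/|d - n_{n}|$, where the simplification of $R$ uses $|n'|^{2} + n_{n}^{2} = 1$. With $n' = (\cos\theta)\psi$ and $n_{n} = \sin\theta$ this reads $C = c\psi$ and $R = c\tanh\lambda$, where $c = \cos\theta/(d - \sin\theta)$ and where the identity $1 - d^{2} = \cos^{2}\theta\,\tanh^{2}\lambda$ (immediate from the formula for $d$ together with $1 - \operatorname{sech}^{2}\lambda = \tanh^{2}\lambda$) converts $R$ into $c\tanh\lambda$. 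One also checks $d - \sin\theta > 0$ on $(-\tfrac{\pi}{2},\tfrac{\pi}{2})$, so $c > 0$. This already exhibits the projected subsphere in the form $\{c\psi + c(\tanh\lambda)\omega:\omega\in\Bbb S^{n-2}\}$ required by (5.1).

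It then remains to make the $c$–$\theta$ relation explicit and to check that the parametrization is exhausted. Substituting $d = \cosh^{-1}\lambda\sqrt{1 + \sinh^{2}\lambda\sin^{2}\theta}$ into $c = \cos\theta/(d - \sin\theta)$ gives at once the first formula in (5.2). Inverting it is the one delicate step: I would isolate the square root in $c(d - \sin\theta) = \cos\theta$, square, and simplify using $\cosh^{2}\lambda - \sinh^{2}\lambda = 1$, after which the terms carrying $\sin^{2}\theta$ collapse, a common factor of $\cos\theta$ cancels, and one is left with $\tan\theta = (c^{2} - \cosh^{2}\lambda)/(2c\cosh^{2}\lambda)$, i.e. the stated arctangent. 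Since squaring can create spurious roots I would confirm the branch at a test value (for instance $\theta = 0$ forces $c = \cosh\lambda$, consistent with $\tan\theta = 0$).

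Finally, the exhaustion of (5.1): the function $c\mapsto (c^{2}-\cosh^{2}\lambda)/(2c\cosh^{2}\lambda)$ is strictly increasing from $-\infty$ to $+\infty$ on $(0,\infty)$, so the arctangent relation sets up a strictly increasing bijection between $\theta\in(-\tfrac{\pi}{2},\tfrac{\pi}{2})$ and $c\in(0,\infty)$; meanwhile, as $\psi$ ranges over $\Bbb S^{n-2}$ every oriented normal with nonzero horizontal part is realized exactly once, the two degenerate directions $\pm e_{n}$ corresponding to the excluded poles. Hence $(\psi,c)$ ranges bijectively over $\Bbb S^{n-2}\times(0,\infty)$ and the projected family is precisely (5.1). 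I expect the main obstacle to be computational rather than conceptual, namely the careful inversion of $c = c(\theta)$ into the $\arctan$ form, where the square root and the hyperbolic identity must be tracked exactly and the correct branch pinned down by a sign check.
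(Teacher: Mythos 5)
Your proposal is correct, but it reaches Lemma 5.1 by a genuinely different route than the paper. The paper reduces everything to the two-dimensional plane $X_{1}X_{n}$ (justified by the rotational symmetry of $\Sigma_{\lambda}$ about the $X_{n}$-axis), finds the tangent line $l_{\theta}$ by forcing the discriminant of a quadratic to vanish, explicitly computes the two points where $l_{\theta}$ meets the unit circle, pushes those two points through $\Lambda$, and reads off the centre and radius of the projected sphere as the midpoint and half-distance of the two images; the proportionality $R_{H}=\tanh\lambda\,|C_{H}|$ is then observed from the resulting closed-form expressions. You instead obtain the tangent hyperplane in one line from the support function $\sqrt{\langle A^{-1}n,n\rangle}$ of the ellipsoid, and you compute the stereographic image of the whole hyperplane section at once by substituting $\Lambda^{-1}(y)$ into $\langle x,n\rangle=d$ and completing the square, which yields $C=n'/(d-n_{n})$ and $R=\sqrt{1-d^{2}}/|d-n_{n}|$ directly; the key relation $R=c\tanh\lambda$ then drops out of the identity $1-d^{2}=\cos^{2}\theta\,\tanh^{2}\lambda$ rather than being observed a posteriori. (I verified the algebra: the completed square gives $R^{2}=(|n'|^{2}+n_{n}^{2}-d^{2})/(n_{n}-d)^{2}=(1-d^{2})/(d-n_{n})^{2}$, your $d$ agrees with the paper's $t=\sqrt{1+\sinh^{2}\lambda\sin^{2}\theta}/\cosh\lambda$, and your squaring-and-cancelling derivation of $\tan\theta=(c^{2}-\cosh^{2}\lambda)/(2c\cosh^{2}\lambda)$, with the branch pinned down at $\theta=0$, matches (5.2).) What your approach buys is brevity and robustness: you never need the explicit coordinates of the intersection points, you avoid the implicit appeal to symmetry needed to justify that the two projected points in the $X_{1}$-line are antipodes of the projected sphere, and your positivity checks ($d>\sin\theta$, hence $c>0$) plus the monotonicity of $c\mapsto c/(2\cosh^{2}\lambda)-1/(2c)$ give a cleaner account of why $(\psi,c)$ exhausts $\Bbb S^{n-2}\times(0,\infty)$ — a point the paper leaves implicit. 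What the paper's computation buys is the explicit intersection points $p_{1},p_{2}$ and their images $p_{1}^{\ast},p_{2}^{\ast}$, which are not needed elsewhere; so nothing is lost by your shortcut.
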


\begin{proof}
Since $\Sigma_{\lambda}$ is invariant with respect to rotations in the group $\mathcal{G}$, it follows that we can make our analysis first on the two dimensional plane $X_{1}X_{n}$ and then use rotations in $\mathcal{G}$ to obtain a complete parametrization of $\Upsilon_{\lambda}^{\ast}$. Hence, on the plane $X_{1}X_{n}$ and for $\theta\in\left[-\frac{\pi}{2},\frac{\pi}{2}\right]$ we want to find at which distance $t\geq0$ the line $x_{1}\cos\theta + x_{n}\sin\theta = t$ intersects the ellipse $x_{n}^{2} + (\cosh^{2}\lambda)x_{1}^{2} = 1$ at exactly one point. Extracting the variable $x_{n}$ from the first equation an inserting it in the second we obtain the following quadratic equation
\vskip-0.2cm
$$\hskip-2cm (\sin^{2}\theta\cosh^{2}\lambda + \cos^{2}\theta)x_{1}^{2} -2x_{1}t\cos\theta + t^{2} - \sin^{2}\theta = 0$$
and we need to find the value of $t$ for which the discriminant of the last equation is equal to zero. The discriminant of the last equation is zero when
$t = \sqrt{1 + \sinh^{2}\lambda\sin^{2}\theta} / \cosh\lambda$. Hence, the line
$$l_{\theta}:(x_{1}\cos\theta + x_{n}\sin\theta)\cosh\lambda = \sqrt{1 + \sinh^{2}\lambda\sin^{2}\theta}$$
is the unique line which is tangent to the ellipse $x_{n}^{2} + (\cosh^{2}\lambda)x_{1}^{2} = 1$ and its normal makes an angle $\theta$ with the $X_{1}$ axis. Now we want to find for each $\theta\in\left[-\frac{\pi}{2},\frac{\pi}{2}\right]$ at which points the line $l_{\theta}$ intersects the circle $x_{1}^{2} + x_{n}^{2} = 1$. By a direct substitution we obtain that the intersection points $p_{1} = (x_{1},y_{1})$ and $p_{2} = (x_{2},y_{2})$ are given by
$$\left(\frac{\left(\sinh\lambda\sin\theta + \sqrt{1 + \sinh^{2}\lambda\sin^{2}\theta}\right)\cos\theta}{\cosh\lambda}, \frac{\sqrt{1 + \sinh^{2}\lambda\sin^{2}\theta}\sin\theta - \sinh\lambda\cos^{2}\theta}{\cosh\lambda}\right),$$
$$\left(\frac{\left(-\sinh\lambda\sin\theta + \sqrt{1 + \sinh^{2}\lambda\sin^{2}\theta}\right)\cos\theta}{\cosh\lambda}, \frac{\sqrt{1 + \sinh^{2}\lambda\sin^{2}\theta}\sin\theta + \sinh\lambda\cos^{2}\theta}{\cosh\lambda}\right).$$

Thus, restricting the stereographic projection $\Lambda$ to the circle $\Bbb S^{1}$ in the plane $X_{1}X_{n}$ we have the following images for the points $p_{1}$ and $p_{2}$ under $\Lambda$:

$$\hskip-3.5cm p_{1}^{\ast} = \Lambda(p_{1}) = \frac{\left(\sinh\lambda\sin\theta + \sqrt{1 + \sinh^{2}\lambda\sin^{2}\theta}\right)\cos\theta}{\cosh\lambda - \sqrt{1 + \sinh^{2}\lambda\sin^{2}\theta}\sin\theta + \sinh\lambda\cos^{2}\theta},$$
$$\hskip-3.5cm p_{2}^{\ast} = \Lambda(p_{2}) = \frac{\left(-\sinh\lambda\sin\theta + \sqrt{1 + \sinh^{2}\lambda\sin^{2}\theta}\right)\cos\theta}{\cosh\lambda - \sqrt{1 + \sinh^{2}\lambda\sin^{2}\theta}\sin\theta - \sinh\lambda\cos^{2}\theta}.$$

Returning back to the whole space $\Bbb R^{n}$, it follows that the image, under $\Lambda$, of the intersection of the unit sphere $\Bbb S^{n - 1}$ with the unique hyperplane $H$ which is tangent to the spheroid $\Sigma_{\lambda}$ and has a unit normal $n = e_{1}\cos\theta + e_{n}\sin\theta$, is the $n - 2$ dimensional sphere in $\Bbb R^{n - 1}$ which has a center at
$$\hskip-1.5cm C_{H} = \left(\frac{1}{2}\left(p_{1}^{\ast} + p_{2}^{\ast}\right),\overline{0}\right) = \left(\frac{\cosh\lambda\cos\theta}{\sqrt{1 + \sinh^{2}\lambda\sin^{2}\theta} - \cosh\lambda\sin\theta},\overline{0}\right)$$
and radius
$$\hskip-1.5cm R_{H} = \frac{1}{2}\left|p_{1}^{\ast} - p_{2}^{\ast}\right| = \frac{\sinh\lambda\cos\theta}{\sqrt{1 + \sinh^{2}\lambda\sin^{2}\theta} - \cosh\lambda\sin\theta}.$$
Observe that for every $\theta\in\left[-\frac{\pi}{2},\frac{\pi}{2}\right]$ we have the relation $R_{H} = \tanh\lambda\cdot\left|C_{H}\right|$ and thus the above sphere has the following parametrization
$$\hskip-4.5cm \mathrm{S}_{\psi,c}^{\ast}:ce_{1} + c(\tanh\lambda)\omega, \omega\in\Bbb S^{n - 2}$$
such that the relation between $c$ and $\theta$ is given by
$$\hskip-3cm c = \frac{\cosh\lambda\cos\theta}{\sqrt{1 + \sinh^{2}\lambda\sin^{2}\theta} - \cosh\lambda\sin\theta},\hskip0.1cm \theta = \arctan\left(\frac{c^{2} - \cosh^{2}\lambda}{2c\cdot\cosh^{2}\lambda}\right).$$
Now, we want to find for a general unit normal $n = ((\cos\theta)\psi, \sin\theta)$, $\left(\psi\in\Bbb S^{n - 2}, \theta\in\left[-\frac{\pi}{2}, \frac{\pi}{2}\right]\right)$ the image, under $\Lambda$, of the intersection of the unit sphere $\Bbb S^{n - 1}$ with the unique hyperplane $H$ which is tangent to the spheroid $\Sigma_{\lambda}$ and has the unit normal $n$. From the rotational symmetry of the spheroid $\Sigma_{\lambda}$ it follows that this is the $n - 2$ dimensional sphere $\mathrm{S}_{\psi,c}^{\ast}$ in $\Bbb R^{n - 1}$ which has the following parametrization:
\vskip-0.35cm
$$\hskip-4.5cm \mathrm{S}_{\psi,c}:c\psi + c(\tanh\lambda)\omega, \omega\in\Bbb S^{n - 2}$$
where again the relation between $c$ and $\theta$ is given by (5.2).

\end{proof}

\begin{lem}

Let $\mathrm{S}_{\psi,c}$ be the $n - 2$ dimensional subsphere of $\Bbb S^{n - 1}$ given by the following parametrization

$$\hskip-5.5cm \mathrm{S}_{\psi,c}:\Lambda^{-1}\left(c\psi + c(\tanh\lambda)\omega\right),\omega\in\Bbb S^{n - 2},$$
then
\begin{equation}\hskip-3cm d\mathrm{S}_{\psi,c} = \frac{(2c\tanh\lambda)^{n - 2}d\omega}{\left(1 + c^{2}\left(1 + \tanh^{2}\lambda + 2\langle\omega,\psi\rangle\tanh\lambda\right)\right)^{n - 2}}.\end{equation}

\end{lem}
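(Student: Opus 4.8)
The plan is to reduce everything to the \emph{conformality} of the stereographic projection. First I would compute the differential of $\Lambda^{-1}$ directly. Writing $\Lambda^{-1}(y) = \bigl(2y,\,|y|^{2} - 1\bigr)/(1 + |y|^{2})$ for $y\in\Bbb R^{n - 1}$ and differentiating in $y_{j}$, one finds after a short calculation that the push-forward vectors $\partial_{j}\Lambda^{-1}(y)$ satisfy $\bigl\langle\partial_{i}\Lambda^{-1}(y),\partial_{j}\Lambda^{-1}(y)\bigr\rangle = \bigl(2/(1 + |y|^{2})\bigr)^{2}\delta_{ij}$. In other words, the differential $d\Lambda^{-1}_{y}$ equals the scalar $\mu(y) = 2/(1 + |y|^{2})$ times a linear isometry of $\Bbb R^{n - 1}$ onto the tangent space $T_{\Lambda^{-1}(y)}\Bbb S^{n - 1}$; this is the one genuinely computational ingredient, and I expect it to be the only real work in the proof.

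Second, I would use the elementary fact that a conformal linear map with factor $\mu$ multiplies $k$-dimensional volume by $\mu^{k}$. The subsphere $\mathrm{S}_{\psi,c}$ is the image under $\Lambda^{-1}$ of the $(n - 2)$-dimensional sphere $\mathrm{S}_{\psi,c}^{\ast} = \{\,c\psi + c(\tanh\lambda)\omega : \omega\in\Bbb S^{n - 2}\,\}$. Restricting $d\Lambda^{-1}_{y}$ to the tangent space of $\mathrm{S}_{\psi,c}^{\ast}$ at the point $y = c\psi + c(\tanh\lambda)\omega$ is again $\mu(y)$ times an isometry onto the tangent space of $\mathrm{S}_{\psi,c}$, so the induced surface measure satisfies $d\mathrm{S}_{\psi,c} = \mu(y)^{n - 2}\,d\sigma(y)$, where $d\sigma$ denotes the Euclidean surface measure of $\mathrm{S}_{\psi,c}^{\ast}$.

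Third, I would evaluate the two remaining pieces explicitly. Since $\mathrm{S}_{\psi,c}^{\ast}$ is the round sphere of radius $c\tanh\lambda$ centered at $c\psi$, its Euclidean area element under the parametrization $\omega\mapsto c\psi + c(\tanh\lambda)\omega$ is $d\sigma = (c\tanh\lambda)^{n - 2}\,d\omega$. Using $|\psi| = |\omega| = 1$ one computes $|y|^{2} = |c\psi + c(\tanh\lambda)\omega|^{2} = c^{2}\bigl(1 + \tanh^{2}\lambda + 2\langle\omega,\psi\rangle\tanh\lambda\bigr)$, hence $1 + |y|^{2}$ is exactly the quantity appearing in the denominator of (5.3). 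Substituting $\mu(y) = 2/(1 + |y|^{2})$ and combining the three factors gives
\[ d\mathrm{S}_{\psi,c} = \frac{2^{n - 2}(c\tanh\lambda)^{n - 2}}{(1 + |y|^{2})^{n - 2}}\,d\omega = \frac{(2c\tanh\lambda)^{n - 2}\,d\omega}{\bigl(1 + c^{2}(1 + \tanh^{2}\lambda + 2\langle\omega,\psi\rangle\tanh\lambda)\bigr)^{n - 2}}, \]
which is precisely (5.3). The only point requiring care is that the conformal factor $\mu$ must be evaluated at the moving point $y = c\psi + c(\tanh\lambda)\omega$ and not at the center $c\psi$; this is exactly why the denominator of (5.3) depends on $\omega$ through $\langle\omega,\psi\rangle$.
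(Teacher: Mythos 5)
Your proof is correct, and it takes a genuinely different and considerably shorter route than the paper's. You reduce the whole lemma to the conformality of the inverse stereographic projection: the Gram matrix computation $\langle\partial_{i}\Lambda^{-1}(y),\partial_{j}\Lambda^{-1}(y)\rangle = \mu(y)^{2}\delta_{ij}$ with $\mu(y) = 2/(1+|y|^{2})$ is a standard fact and is easily verified, and once it is in hand the scaling of $(n-2)$-dimensional volume by $\mu^{n-2}$, the area element $(c\tanh\lambda)^{n-2}d\omega$ of the Euclidean sphere $\mathrm{S}_{\psi,c}^{\ast}$, and the identity $1+|c\psi + c(\tanh\lambda)\omega|^{2} = 1 + c^{2}(1+\tanh^{2}\lambda+2\langle\omega,\psi\rangle\tanh\lambda)$ combine immediately to give (5.3). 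The paper instead proceeds by a long explicit computation: it uses Lemma 5.1 to identify $\mathrm{S}_{\psi,c}$ with a tangent-hyperplane section, applies the rotation $A_{\theta}$ to bring the subsphere into a standard position, rescales by the factor $\mathrm{G}(\lambda,c)$ of (5.6) so that the parametrization takes the form $(\sqrt{1-r^{2}}\psi^{\ast},r,C)$, and then computes $dr/d\rho$ and reassembles the hyperbolic-trigonometric factors. Your argument buys brevity, transparency, and generality (it applies verbatim to the image of any hypersurface of revolution, with no reliance on Lemma 5.1 or on the tangency interpretation), at the cost of invoking the conformal-scaling principle for submanifold volume; the paper's argument is more self-contained but substantially heavier. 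You are also right to flag that $\mu$ must be evaluated at the moving point $c\psi + c(\tanh\lambda)\omega$ rather than at the center --- that is exactly where the $\langle\omega,\psi\rangle$ dependence in the denominator comes from. No gaps.
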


\begin{proof}

From the definition of the inverse stereographic projection $\Lambda^{-1}$ it follows that $\mathrm{S}_{\psi,c}$ has the following parametrization
$$\hskip-6cm\mathrm{S}_{\psi,c} = \left\{\left(\frac{2c(\psi + (\tanh\lambda)\omega)}{1 + c^{2}\left(1 + \tanh^{2}\lambda + 2\langle\psi,\omega\rangle\tanh\lambda\right)},\right.\right.$$ $$\hskip-1.5cm\left.\left.  \frac{- 1 + c^{2}\left(1 + \tanh^{2}\lambda + 2\langle\psi,\omega\rangle\tanh\lambda\right)}{1 + c^{2}\left(1 + \tanh^{2}\lambda + 2\langle\psi,\omega\rangle\tanh\lambda\right)}\right):\omega\in\Bbb S^{n - 2}\right\}.$$
Making the following parametrization for $\omega$:
$$\hskip-4cm\omega = (\cos\rho)\psi + (\sin\rho)\psi^{\ast}, \psi^{\ast}\in\Bbb S_{\psi}^{n - 3}, \rho\in[0,\pi],$$
we have the following parametrization for $\mathrm{S}_{\psi,c}$
$$\hskip-6cm\mathrm{S}_{\psi,c} = \left\{\left(\frac{2c((1 + \cos\rho\tanh\lambda)\psi + (\sin\rho\tanh\lambda)\psi^{\ast})}{1 + c^{2}\left(1 + \tanh^{2}\lambda + 2(\cos\rho)\tanh\lambda\right)},\right.\right.$$ \begin{equation}\hskip1cm\left.\left.  \frac{- 1 + c^{2}\left(1 + \tanh^{2}\lambda + 2(\cos\rho)\tanh\lambda\right)}{1 + c^{2}\left(1 + \tanh^{2}\lambda + 2(\cos\rho)\tanh\lambda\right)}\right):\psi^{\ast}\in\Bbb S_{\psi}^{n - 3}, \rho\in[0,\pi]\right\}.\end{equation}

By Lemma 5.1 it follows that the subsphere $\mathrm{S}_{\psi,c}$ is the intersection of $\Bbb S^{n - 1}$ with the unique hyperplane which is tangent to $\Sigma_{\lambda}$ and has the unit normal $n = ((\cos\theta)\psi, \sin\theta)$ where the relation between $\theta$ and $c$ is given by equation (5.2). Since the variable $\psi$ in the unit normal $n$ comes from rotations in the group $\mathcal{G}$ and since the infinitesimal measure of any subset in $\Bbb S^{n - 1}$ is invariant with respect to rotations in $\mathcal{G}$, it follows that $\mathrm{S}_{\psi,c}$ is independent of $\psi$. Hence, we will assume from now on that $\psi = e_{n - 1}$. In this case observe that for the following rotation matrix
$$\hskip-6cm A_{\theta} = \left(\begin{array}{cccccc}
1 & 0 & ... & 0 & 0 & 0\\
0 & 1 & ... & 0 & 0 & 0\\
& .............\\
0 & 0 & ... & 1 & 0 & 0\\
0 & 0 & ... & 0 & -\sin\theta & \cos\theta\\
0 & 0 & ... & 0 & \cos\theta & \sin\theta
\end{array}\right) $$
we have
$$\hskip-6cm A_{\theta}\left(\mathrm{S}_{e_{n - 1},c}\right) = \left\{\left(\frac{2c(\sin\rho\tanh\lambda)\psi^{\ast}}{1 + c^{2}\left(1 + \tanh^{2}\lambda + 2(\cos\rho)\tanh\lambda\right)},\right.\right.$$ $$\hskip1cm \frac{\cos\theta\sinh\lambda}{\cosh^{3}\lambda}\cdot\frac{\cos\rho\cosh^{2}\lambda + (2\sinh\lambda\cosh\lambda + (2\cosh^{2}\lambda - 1)\cos\rho)c^{2}}{1 + c^{2}\left(1 + \tanh^{2}\lambda + 2(\cos\rho)\tanh\lambda\right)},$$
\begin{equation} \left.\left. \hskip-3.5cm\frac{\left(\cosh^{2}\lambda + c^{2}\right)\cos\theta}{2c\cosh^{2}\lambda}\right):\psi^{\ast}\in\Bbb S_{e_{n - 1}}^{n - 3}, \rho\in[0,\pi]\right\}\end{equation}
where we used the relation (5.2) between the variables $\theta$ and $c$. Denote
\begin{equation}\hskip-2cm \mathrm{G}(\lambda, c) = \sqrt{1 - \frac{\left(\cosh^{2}\lambda + c^{2}\right)^{2}\cos^{2}\theta}{4c^{2}\cosh^{4}\lambda}} = \frac{2c\cosh\lambda\sinh\lambda}{\sqrt{4c^{2}\cosh^{4}\lambda + (c^{2} - \cosh^{2}\lambda)^{2}}},\end{equation}
then dividing the parametrization (5.5) of $A_{\theta}\left(\mathrm{S}_{e_{n - 1},c}\right)$ by $\mathrm{G}(\lambda, c)$ we have
$$\hskip-10.5cm \frac{1}{\mathrm{G}(\lambda, c)}A_{\theta}\left(\mathrm{S}_{e_{n - 1},c}\right)$$
$$ \hskip-5.5cm = \left\{\left(\frac{1}{\mathrm{G}(\lambda, c)}\cdot\frac{2c(\sin\rho\tanh\lambda)\psi^{\ast}}{1 + c^{2}\left(1 + \tanh^{2}\lambda + 2(\cos\rho)\tanh\lambda\right)},\right.\right.$$ $$\hskip1cm \frac{1}{\mathrm{G}(\lambda, c)}\cdot\frac{\cos\theta\sinh\lambda}{\cosh^{3}\lambda}\cdot\frac{\cos\rho\cosh^{2}\lambda + (2\sinh\lambda\cosh\lambda + (2\cosh^{2}\lambda - 1)\cos\rho)c^{2}}{1 + c^{2}\left(1 + \tanh^{2}\lambda + 2(\cos\rho)\tanh\lambda\right)},$$
\begin{equation} \left.\left. \hskip-5.5cm \frac{\sqrt{1 - \mathrm{G}^{2}(\lambda, c)}}{\mathrm{G}(\lambda, c)}\right):\psi^{\ast}\in\Bbb S_{e_{n - 1}}^{n - 3}, \rho\in[0,\pi]\right\}.\end{equation}
Observe that the right hand side of the parametrization (5.7) is of the form
\begin{equation}\hskip-12cm\left(\sqrt{1 - r^{2}}\psi^{\ast},r, C\right)\end{equation}
where
$$r(\rho) = \frac{1}{\mathrm{G}(\lambda, c)}\cdot\frac{\cos\theta\sinh\lambda}{\cosh^{3}\lambda}\cdot\frac{\cos\rho\cosh^{2}\lambda + (2\sinh\lambda\cosh\lambda + (2\cosh^{2}\lambda - 1)\cos\rho)c^{2}}{1 + c^{2}\left(1 + \tanh^{2}\lambda + 2(\cos\rho)\tanh\lambda\right)}$$
and $C$ is a constant which does not depend on $r$ and $\psi^{\ast}$. Since $C$ does not depend on $r$ or $\psi^{\ast}$ it can be easily verified that the infinitesimal volume measure given by the parametrization (5.8) is $(1 - r^{2})^{\frac{n - 4}{2}}d\psi^{\ast}dr$.
Since
\vskip-0.2cm
$$\hskip-1.5cm\frac{dr}{d\rho} = \frac{1}{\mathrm{G}(\lambda, c)}\cdot\frac{\cos\theta\sinh\lambda}{\cosh^{5}\lambda}\frac{\left(\cosh^{4}\lambda + 2c^{2}\cosh(2\lambda)\cosh^{2}\lambda + c^{4}\right)\sin\rho}{\left(1 + c^{2}\left(1 + \tanh^{2}\lambda + 2(\cos\rho)\tanh\lambda\right)\right)^{2}}$$
it follows that
\vskip-0.2cm
$$\hskip-10.5cm d\left(\frac{1}{\mathrm{G}(\lambda, c)}A_{\theta}\left(\mathrm{S}_{e_{n - 1},c}\right)\right)$$
\begin{equation} = \frac{d\rho d\psi^{\ast}\sin^{n - 3}\rho}{\mathrm{G}^{n - 3}(\lambda, c)}\frac{\cos\theta\sinh\lambda}{\cosh^{5}\lambda}\frac{(2c\tanh\lambda)^{n - 4}\left(\cosh^{4}\lambda + 2c^{2}\cosh(2\lambda)\cosh^{2}\lambda + c^{4}\right)}{\left(1 + c^{2}\left(1 + \tanh^{2}\lambda + 2(\cos\rho)\tanh\lambda\right)\right)^{n - 2}}.\end{equation}
Observe that since $\omega = (\cos\rho)e_{n - 1} + (\sin\rho)\psi^{\ast}$ we have $d\omega = \sin^{n - 3}\rho d\rho d\psi^{\ast}$ and since the rotation matrix $A_{\theta}$ does not change the infinitesimal measure in the left hand side of equation (5.9) it follows that
\vskip-0.2cm
$$\hskip-12.5cm\frac{d\mathrm{S}_{e_{n - 1},c}}{\mathrm{G}^{n - 2}(\lambda, c)}$$
\begin{equation} \hskip-0.5cm = \frac{d\omega}{\mathrm{G}^{n - 3}(\lambda, c)}\frac{\cos\theta\sinh\lambda}{\cosh^{5}\lambda}\frac{(2c\tanh\lambda)^{n - 4}\left(\cosh^{4}\lambda + 2c^{2}\cosh(2\lambda)\cosh^{2}\lambda + c^{4}\right)}{\left(1 + c^{2}\left(1 + \tanh^{2}\lambda + 2(\cos\rho)\tanh\lambda\right)\right)^{n - 2}}.\end{equation}
Multiplying equation (5.10) by $\mathrm{G}^{n - 2}(\lambda,c)$ and using the explicit formula (5.6) for $\mathrm{G}(\lambda,c)$ we obtain that
\vskip-0.2cm
$$\hskip-13cm d\mathrm{S}_{e_{n - 1},c}$$
$$ \hskip-1.6cm = \left(\frac{(2c)^{n - 3}\cos\theta\sinh^{n - 2}\lambda}{\cosh^{n}\lambda}\frac{\left(\cosh^{4}\lambda + 2c^{2}\cosh(2\lambda)\cosh^{2}\lambda + c^{4}\right)}{\left(1 + c^{2}\left(1 + \tanh^{2}\lambda + 2(\cos\rho)\tanh\lambda\right)\right)^{n - 2}}\right)$$ $$\hskip-8.2cm \times\frac{d\omega}{\sqrt{4c^{2}\cosh^{4}\lambda + (c^{2} - \cosh^{2}\lambda)^{2}}}.$$
Using the fact that
\vskip-0.2cm
$$\hskip-6cm \cos\theta = \frac{2c\cosh^{2}\lambda}{\sqrt{4c^{2}\cosh^{4}\lambda + (c^{2} - \cosh^{2}\lambda)^{2}}}$$
and that $\cos\rho = \langle\omega,\psi\rangle$ we obtain Lemma 5.2.

\end{proof}

\begin{lem}
Let $g$ be a function defined on $\Bbb R^{n - 1}$ such that $g(x) / |x|^{n - 2}$ belongs to $L_{1}(\Bbb R^{n - 1}\setminus\{0\})$. Let
\vskip-0.2cm
$$\hskip-6cm g(r\zeta) = \sum_{m = 0}^{\infty}\sum_{l = 1}^{d_{m}}g_{m,l}(r)Y_{l}^{m}(\zeta), r\geq0, \zeta\in\Bbb S^{n - 2}$$
be the expansion of $g$ into spherical harmonics in $\Bbb R^{n - 1}$. Then, for every $m\geq0, 1\leq l \leq d_{m}$ the Mellin transform $\mathcal{M}(g_{m,l})$ of $g_{m,l}$ exists in the strip $0 < \Re(\rho) < 1$. Also, if $h_{m,\lambda}$ is defined as in (2.3), then $\mathcal{M}(h_{m,\lambda})(1 - \cdot)$ exists in the same strip.
\end{lem}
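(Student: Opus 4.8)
The plan is to prove the two assertions separately, since they concern opposite ends of the half-line.

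The statement about $h_{m,\lambda}$ is the easy half and I would dispose of it first. For finite $\lambda$ formula (2.3) shows that $h_{m,\lambda}$ is supported on the compact interval $[1-\tanh\lambda,\,1+\tanh\lambda]$, which lies strictly inside $(0,\infty)$ because $0<\tanh\lambda<1$. On that interval $h_{m,\lambda}$ is the product of the bounded factor $x\,C_{m}^{\frac{n-3}{3}}\!\big(\tfrac{x^{2}+1-\tanh^{2}\lambda}{2x}\big)$ with the power $\big(\cdots\big)^{\frac{n-4}{2}}$ of the quartic; for $n\ge 4$ this power is bounded, while for $n=3$ the exponent equals $-\tfrac12$ and the quartic has simple zeros at the two endpoints, so it contributes only integrable square-root singularities there. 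Hence $h_{m,\lambda}\in L^{1}(\Bbb R^{+})$ and is supported away from $0$ and $\infty$. Therefore, for every $s\in\Bbb C$ the factor $y^{s-1}$ is bounded on the support and $\mathcal{M}(h_{m,\lambda})(s)=\int_{1-\tanh\lambda}^{1+\tanh\lambda}y^{s-1}h_{m,\lambda}(y)\,dy$ converges; in fact $\mathcal{M}(h_{m,\lambda})$ is entire, so $\mathcal{M}(h_{m,\lambda})(1-\rho)$ exists for every $\rho$ and in particular on $0<\Re(\rho)<1$.

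For $g_{m,l}$ I would first reduce to an estimate on $g$. Orthonormality gives $g_{m,l}(r)=\int_{\Bbb S^{n-2}}g(r\zeta)\overline{Y_{l}^{m}(\zeta)}\,d\zeta$, whence the pointwise bound $|g_{m,l}(r)|\le\|Y_{l}^{m}\|_{\infty}\,G(r)$ with $G(r)=\int_{\Bbb S^{n-2}}|g(r\zeta)|\,d\zeta$, the constant $\|Y_{l}^{m}\|_{\infty}$ being finite since $Y_{l}^{m}$ is continuous on the compact sphere. Passing to polar coordinates $x=r\zeta$, $dx=r^{n-2}\,dr\,d\zeta$, the hypothesis $g(x)/|x|^{n-2}\in L^{1}(\Bbb R^{n-1}\setminus\{0\})$ becomes exactly $\int_{0}^{\infty}G(r)\,dr<\infty$. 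Absolute convergence of $\mathcal{M}(g_{m,l})(\rho)=\int_{0}^{\infty}r^{\rho-1}g_{m,l}(r)\,dr$ at abscissa $\sigma=\Re(\rho)$ thus follows once $\int_{0}^{\infty}r^{\sigma-1}G(r)\,dr<\infty$, and I would establish this by splitting at $r=1$. On $[1,\infty)$ with $\sigma<1$ one has $r^{\sigma-1}\le 1$, so $\int_{1}^{\infty}r^{\sigma-1}G\le\|G\|_{L^{1}}<\infty$; this is where the integrability hypothesis is spent and it fixes the right edge $\Re(\rho)<1$. On $(0,1]$ the weight $r^{\sigma-1}$ is unbounded, and here I would use that the $g$ to which the lemma is applied, namely $g=(f\circ\Lambda^{-1})/(1+|x|^{2})^{n-2}$ with $f\in C(\Bbb S^{n-1})$, is bounded near the origin; then each $g_{m,l}$ is bounded on $(0,1]$ and $\int_{0}^{1}r^{\sigma-1}|g_{m,l}|\,dr\le C\int_{0}^{1}r^{\sigma-1}\,dr=C/\sigma<\infty$ for every $\sigma>0$, fixing the left edge $\Re(\rho)>0$. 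Combining the two regions gives convergence throughout $0<\Re(\rho)<1$.

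The genuine obstacle is the behaviour at the origin. The bare hypothesis $g(x)/|x|^{n-2}\in L^{1}$ is equivalent to $G\in L^{1}(\Bbb R^{+})$, and by the estimate above this yields convergence of the Mellin integral only at the single abscissa $\Re(\rho)=1$; indeed a radial $g$ with $G(r)\sim r^{-1/2}$ near $0$ lies in $L^{1}$ yet has $\mathcal{M}(g_{0,l})$ divergent for $\sigma\le\tfrac12$. Hence the left edge $\Re(\rho)>0$ cannot be extracted from integrability alone, and the proof must invoke the regularity of $g$ near $0$ supplied by (4.1). I would therefore make that boundedness explicit at the start, after which the two one-sided estimates combine routinely to give the strip.
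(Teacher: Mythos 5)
Your proof follows essentially the same route as the paper's: split the Mellin integral at $r=1$, handle $[1,\infty)$ using $r^{\sigma-1}\le 1$ together with the $L^{1}(\Bbb R^{+})$ bound on $g_{m,l}$ obtained from polar coordinates (this is exactly the paper's Lemma 5.4), and handle $(0,1]$ by boundedness of $g_{m,l}$ near the origin. The only real difference is that you correctly observe that this boundedness does not follow from the stated hypothesis $g(x)/|x|^{n-2}\in L^{1}$ alone (your $G(r)\sim r^{-1/2}$ example is valid) and must be imported from the regularity of the particular $g$ of (4.1), whereas the paper simply asserts it; likewise your compact-support argument for $h_{m,\lambda}$ is a correct, slightly more explicit version of the paper's remark that $h_{m,\lambda}\in L^{1}(\Bbb R^{+})$.
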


\begin{proof}
By the definition of the Mellin transform, we have
$$\hskip-4cm\mathcal{M}(g_{m,l})(s) = \int_{0}^{\infty}y^{s - 1}g_{m,l}(y)dy.$$
Assume that $0 < \Re(s) < 1$, then $s = a + ib$ where $0 < a < 1$, $b\in\Bbb R$ and thus
$$\left|\mathcal{M}(g_{m,l})(s)\right| = \left|\int_{0}^{\infty}y^{s - 1}g_{m,l}(y)dy\right| \leq \int_{0}^{\infty}\left|y^{s - 1}\right||g_{m,l}(y)|dy$$
$$ = \int_{0}^{\infty}\left|y^{a + ib - 1}\right||g_{m,l}(y)|dy.$$
Since $y^{ib} = \exp(ib\log(y))$ and $\log(y)$ is real when $y > 0$, it follows that $|y^{ib}| = 1$. Thus
\begin{equation}
\left|\mathcal{M}(g_{m,l})(s)\right| \leq \int_{0}^{\infty}\left|y^{a - 1}\right||g_{m,l}(y)|dy \leq \int_{0}^{1}\left|y^{a - 1}\right||g_{m,l}(y)|dy + \int_{1}^{\infty}\left|y^{a - 1}\right||g_{m,l}(y)|dy.
\end{equation}
In Lemma 5.4 (see the end of this lemma) it is proved that if $g(x) / |x|^{n - 2}$ belongs to $L_{1}(\Bbb R^{n - 1}\setminus\{0\})$, then $g_{m,l}\in L_{1}(\Bbb R^{+})$ for $m\geq0, 1\leq l \leq d_{m}$. Thus, the first integral in the right hand side of equation (5.11) converges since $- 1 < a - 1 < 0$ and $g_{m,l}$ is bounded in the interval $(0,1)$ and the second integral converges since $g_{m,l}\in L_{1}(\Bbb R^{+})$ and $\left|y^{a - 1}\right||g_{m,l}(y)|\leq|g_{m,l}(y)|$ when $y\rightarrow\infty$.

By the definition of $h_{m,\lambda}$, it follows easily that $h_{m,\lambda}\in L_{1}(\Bbb R^{+})$ when the dimension $n$ is greater than or equals to $2$. Thus it can be proved in the same way that the Mellin transform $\mathcal{M}(h_{m,\lambda})$ of $h_{m,\lambda}$ exists on the strip $0 < \Re(\rho) < 1$ which is equivalent to the existence of $\mathcal{M}(h_{m,\lambda})(1 - \cdot)$ on the strip $0 < \Re(\rho) < 1$.
\end{proof}

\begin{lem}
Let $g$ be a function in $C^{\infty}(\Bbb R^{n - 1})$ such that $g(x) / |x|^{n - 2}$ belongs to $L_{1}(\Bbb R^{n - 1}\setminus\{0\})$. Let
\vskip-0.2cm
$$\hskip-6cm g(r\zeta) = \sum_{m = 0}^{\infty}\sum_{l = 1}^{d_{m}}g_{m,l}(r)Y_{l}^{m}(\zeta), r\geq0, \zeta\in\Bbb S^{n - 2}$$
be the expansion of $g$ into spherical harmonics in $\Bbb R^{n - 1}$. Then for every $m\geq0, 1\leq l \leq d_{m}$, $g_{m,l}\in L_{1}(\Bbb R^{+})$.
\end{lem}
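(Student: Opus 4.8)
The plan is to write each radial coefficient $g_{m,l}$ as a spherical projection of $g$ and then to observe that the weighted $L_1$ hypothesis is, after passing to polar coordinates, exactly the integrability statement needed to bound the $L_1(\Bbb R^+)$ norm of $g_{m,l}$. First I would recall that, since the system $\{Y_l^m\}_{m\geq0,\,1\leq l\leq d_m}$ is orthonormal on $\Bbb S^{n-2}$, the coefficient functions are recovered by projection,
$$g_{m,l}(r) = \int_{\Bbb S^{n-2}} g(r\zeta)\,\overline{Y_l^m(\zeta)}\,d\zeta, \qquad r > 0.$$

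Next I would estimate this pointwise in $r$. Each $Y_l^m$ is continuous on the compact sphere $\Bbb S^{n-2}$ and hence bounded, say by $M_{m,l} = \sup_{\zeta}|Y_l^m(\zeta)| < \infty$, so that $|g_{m,l}(r)| \leq M_{m,l}\int_{\Bbb S^{n-2}}|g(r\zeta)|\,d\zeta$ for every $r > 0$. Integrating this bound over $r\in(0,\infty)$ would yield
$$\int_0^\infty |g_{m,l}(r)|\,dr \leq M_{m,l}\int_0^\infty\!\!\int_{\Bbb S^{n-2}}|g(r\zeta)|\,d\zeta\,dr.$$
The crucial step is then to identify the right-hand double integral. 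Writing $x = r\zeta$ with $r = |x|$ and $\zeta\in\Bbb S^{n-2}$, the volume element on $\Bbb R^{n-1}$ is $dx = r^{n-2}\,dr\,d\zeta$, so by Tonelli's theorem (the integrand being nonnegative) the weight $|x|^{-(n-2)} = r^{-(n-2)}$ appearing in the hypothesis precisely cancels this Jacobian and gives
$$\int_{\Bbb R^{n-1}\setminus\{0\}}\frac{|g(x)|}{|x|^{n-2}}\,dx = \int_0^\infty\!\!\int_{\Bbb S^{n-2}}|g(r\zeta)|\,d\zeta\,dr.$$
Since the left-hand side is finite by assumption, the double integral is finite, and therefore $g_{m,l}\in L_1(\Bbb R^+)$, which is the claim.

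The only genuine point of the argument is this cancellation: recognizing that the weight $|x|^{-(n-2)}$ in the hypothesis is exactly tuned to absorb the polar Jacobian $r^{n-2}$, converting the weighted $L_1$ condition on $g$ into an unweighted product integrability over $(0,\infty)\times\Bbb S^{n-2}$. Everything else is routine: the smoothness of $g$ is used only to guarantee that the coefficients $g_{m,l}$ are well defined, while the conclusion itself follows from the boundedness of spherical harmonics on a compact sphere together with Tonelli's theorem. I do not anticipate any real obstacle beyond correctly setting up the polar change of variables.
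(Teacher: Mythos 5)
Your proof is correct and follows essentially the same route as the paper's: project onto $Y_l^m$, bound the spherical harmonic by its sup, and use the polar-coordinate Jacobian $r^{n-2}$ to convert the weighted $L_1$ hypothesis on $g$ into the $L_1(\Bbb R^+)$ bound on $g_{m,l}$. Your explicit invocation of Tonelli's theorem is a small but welcome addition of rigor over the paper's version.
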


\begin{proof}
From the orthogonality condition for spherical harmonics, we have
$$\hskip-4cm g_{m,l}(r) = \int_{\Bbb S^{n - 2}}g(r\zeta)Y_{l}^{m}(\zeta)dS(\zeta).$$
Thus,
$$\hskip-2cm \int_{0}^{\infty}|g_{m,l}(r)|dr = \int_{0}^{\infty}\left|\int_{\Bbb S^{n - 2}}g(r\zeta)Y_{l}^{m}(\zeta)dS(\zeta)\right|dr$$
$$ \leq \int_{0}^{\infty}\int_{\Bbb S^{n - 2}}|g(r\zeta)Y_{l}^{m}(\zeta)|dS(\zeta)dr\leq M_{l,m}\int_{0}^{\infty}\int_{\Bbb S^{n - 2}}|g(r\zeta)|dS(\zeta)dr$$
where $M_{l,m}$ is an upper bound for $|Y_{l}^{m}(\zeta)|, \zeta\in \Bbb S^{n - 2}$. Making the change of variables $x = r\zeta$, $dx = r^{n - 2}dS(\zeta)dr$ yields
\begin{equation}
\int_{0}^{\infty}|g_{m,l}(r)|dr \leq M_{l,m}\int_{\Bbb R^{n - 1}}\frac{|g(x)|}{|x|^{n - 2}}dx
\end{equation}
and since $g(x) / |x|^{n - 2}$ belongs to $L_{1}(\Bbb R^{n - 1}\setminus\{0\})$, it follows that the integral in the right hand side of equation (5.12) converges which proves the lemma.
\end{proof}

\end{document}